\theoremstyle{definition}
\newtheorem{theorem}{Theorem}
\newtheorem{definition}{Definition}
\newtheorem{lemma}[theorem]{Lemma}
\title{Geometric conditions for bounded point evaluations in spaces of several complex variables}
\author{Stephen Deterding\thanks{Department of Mathematics and Physics, Marshall University, Huntington WV 25705, USA}}
\date{}
\begin{document}

\maketitle

\begin{abstract}
    Let $U$ be a bounded domain in $\mathbb C^d$ and let $L^p_a(U)$, $1 \leq p < \infty$, denote the space of functions that are analytic on $\overline{U}$ and bounded in the $L^p$ norm on $U$. A point $x \in \overline{U}$ is said to be a bounded point evaluation for $L^p_a(U)$ if the linear functional $f \to f(x)$ is bounded in $L^p_a(U)$. In this paper, we provide a purely geometric condition given in terms of the Sobolev $q$-capacity for a point to be a bounded point evaluation for $L^p_a(U)$. This extends results known only for the single variable case to several complex variables.
\end{abstract}

\section{Introduction}

The theory of rational approximation of analytic functions in the complex plane has been well developed over many years since the publication of Runge's theorem in 1885; however, much less is known about rational approximation in the context of several complex variables, in part because Runge's theorem does not hold in higher dimensional spaces. There are many results known to be true in the one variable case for which it is either unknown whether they can be extended to a higher dimensional analogue or it is known that such an extension does not hold. In this paper, we consider one such result involving the existence of bounded point evaluations on the closure of rational functions in the $L^p$ norm. This corresponds to $L^p$ approximation as opposed to uniform approximation as in Runge's theorem.

Let $X \subseteq \mathbb C$ be a compact set with positive Lebesgue (area) measure, and let $R_0(X)$ denote the set of rational functions with poles off $X$. Then for $p \geq 1$, $R^p(X)$ is defined to be the closure of $R_0(X)$ in the $L^p(X)$ norm. 

\begin{definition}
A point $x \in X$ is said to be a \textbf{bounded point evaluation on} $R^p(X)$ if the linear functional $f \to f(x)$ defined for rational functions is bounded in $R^p(X)$. 
\end{definition}

Bounded point evaluations have been widely studied due to their connection with rational approximation and the invariant subspace problem. \cite{Brennan1971, Brennan2, Conway, Wolf}. Although bounded point evaluations are an analytic property of function spaces, in the one-dimensional case the existence of a bounded point evaluation can be characterized by purely geometric conditions on the underlying set. Specifically, for each type of approximation (e.g. uniform or $L^p$), there is an associated capacity connected to it and many properties of the approximation can be characterized by this capacity. For $R^p(X)$, the appropriate capacity is the Sobolev $q$-capacity, which can be defined more generally for subsets of $\mathbb C^d$ as opposed to other capacities, such as analytic capacity, which are only defined in the one variable case. This is helpful when trying to extend these one variable results to several complex variables as we will not need to determine the correct higher dimensional analogue for the appropriate capacity.

\begin{definition}
\label{q-capacity}
    Let $K \subseteq \mathbb C^d$ be a compact set. Then the \textbf{Sobolev $q$-capacity} is denoted by $\Gamma_q(K)$ and defined as

\begin{align*}
    \Gamma_q(K) = \inf_{u} \int |\nabla u|^q dV
\end{align*}

\noindent where the infimum is taken over all Lipschitz continuous functions $u$ with compact support and $u \geq 1$ on $K$.
    
\end{definition}

\noindent For a non-compact set $E$ the Sobolev $q$-capacity is defined as $\displaystyle \Gamma_q(E) = \sup_K \Gamma_q(K)$ where the supremum is taken over all compact subsets of $E$.

Let $a_n(x)$ denote the annulus $\{z\in \mathbb C: 2^{-(n+1)} < |z-x| < 2^{-n}\}$. The following theorem, which provides necessary and sufficient conditions for the existence of bounded point evaluations on $R^p(X)$, was first proven by Hedberg \cite{Hedberg1972} for the case of $p>2$ and Fernstr\"om and Polking \cite{Fernstrom} for $p=2$.

\begin{theorem}
\label{R^p}
Let $X \subseteq \mathbb C$ be a compact set. Then for $p \geq 2$, $x$ is a bounded point evaluation on $R^p(X)$ if and only if

\begin{align*}
    \sum_{n=1}^\infty 2^{nq} \Gamma_q(a_n(x) \setminus X) < \infty
\end{align*}

\noindent where $q = \frac{p}{p-1}$ and $\Gamma_q(E)$ denotes the Sobolev $q$-capacity of $E$. 

\end{theorem}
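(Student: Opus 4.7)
The plan is to prove the theorem through $L^p$--$L^q$ duality combined with the Cauchy-Green (Pompeiu) formula and an annular decomposition about $x$, with the Sobolev $q$-capacity appearing as the dual quantity measuring the obstruction to realising a prescribed Cauchy transform in $L^q$. By Hahn-Banach and the identification $(L^p(X))^{*}=L^q(X)$ with $q=p/(p-1)$, the point $x$ is a bounded point evaluation on $R^p(X)$ if and only if there exists $g\in L^q(X)$ with $f(x)=\int_X fg\,dA$ for every $f\in R_0(X)$; testing against the simple rationals $(z-w)^{-1}\in R_0(X)$ for $w\notin X$ shows that the Cauchy transform $\hat g(w)=\int g(z)/(z-w)\,dA(z)$ is forced to equal $(x-w)^{-1}$ off $X$. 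Thus the theorem is reduced to deciding when such a $g$ exists in $L^q$, and the annular capacities record this precisely.

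For the sufficient direction, I would construct the representer (equivalently, a bounded linear functional on $L^p(X)$ extending point evaluation on $R_0(X)$) directly from near-extremal Lipschitz functions for the annular capacities. For each $n\geq 1$ pick $\phi_n$, compactly supported in a slight enlargement of $a_n(x)$, with $\phi_n\geq 1$ on a compact exhaustion of $a_n(x)\setminus X$ and $\int|\nabla\phi_n|^q\,dA\leq 2\Gamma_q(a_n(x)\setminus X)$. A Whitney-type partition of unity then assembles these into a single Lipschitz cutoff $\Phi$ with $\Phi(x)=1$, vanishing outside a small disk about $x$, and with $\bar\partial\Phi$ supported in $\bigcup_n(a_n(x)\setminus X)$. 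The Cauchy-Green identity
\begin{align*}
f(x) = -\frac{1}{\pi}\int f(z)\,\frac{\bar\partial\Phi(z)}{z-x}\,dA(z)
\end{align*}
holds for every $f$ analytic in a neighbourhood of $\mathrm{supp}\,\Phi$. Applying H\"older's inequality annulus-by-annulus together with $|z-x|^{-q}\asymp 2^{nq}$ on $a_n(x)$, and converting the local $L^p$ mass of $f$ outside $X$ into $\|f\|_{L^p(X)}$ via the mean-value property for analytic functions, yields
\begin{align*}
|f(x)|\leq C\,\|f\|_{L^p(X)}\Biggl(\sum_{n=1}^\infty 2^{nq}\,\Gamma_q\bigl(a_n(x)\setminus X\bigr)\Biggr)^{1/q},
\end{align*}
and the right-hand side is finite by hypothesis, giving the bounded point evaluation.

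For the necessary direction, I would reverse this computation: starting from a representer $g\in L^q(X)$ whose Cauchy transform equals $(x-\cdot)^{-1}$ off $X$, a Hedberg-type (Koosis-type) inequality shows that on each annulus the local $L^q$ mass of $g$ dominates $2^{nq}\Gamma_q(a_n(x)\setminus X)$, because $g$ must suffice to reproduce the prescribed Cauchy-transform identity throughout $a_n(x)\setminus X$; summing in $n$ recovers the capacity condition from $\|g\|_q<\infty$. The main obstacle is the sufficiency construction: gluing the local extremals $\phi_n$ into a global $\Phi$ without accumulating overlap that destroys the gradient estimate, and then converting a pairing naturally defined over $\mathbb{C}\setminus X$ into a bound against $\|f\|_{L^p(X)}$. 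The hypothesis $p\geq 2$ (equivalently $q\leq 2$) is used precisely at this stage: it guarantees the near-subadditivity of $\Gamma_q$ across the annular decomposition and the validity of the Hedberg--Havin--Maz'ya inequality invoked in the necessity step. For $p<2$ the Sobolev $q$-capacity is too coarse to see the correct geometric obstruction and must be replaced by a Bessel-potential capacity, which is why the theorem is stated only in the range $p\geq 2$.
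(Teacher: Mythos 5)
First, note that the paper does not actually prove this statement: Theorem \ref{R^p} is quoted from Hedberg and Fernstr\"om--Polking, and the only argument the paper supplies is for Theorem \ref{main}, which is the \emph{sufficiency} half generalized to $\mathbb C^d$. Measured against that, your sufficiency sketch is essentially the same construction the paper uses in Section 3, specialized to one variable: near-extremal Lipschitz functions for $\Gamma_q(a_n(x)\setminus X)$, glued with bounded overlap into a single cutoff, the Cauchy--Green formula in place of Bochner--Martinelli, $|z-x|^{-1}\asymp 2^n$ on $a_n$, and H\"older twice. Two caveats on that half. (a) The ``mean-value property'' is not the mechanism by which $\int_{a_n\setminus X}|f|^p$ gets converted into $\|f\|_{L^p(X)}^p$ --- indeed $f$ has poles off $X$, so no such bound holds. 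The correct device (used in the paper) is to excise compact sets $K_n\subseteq \overline{a_n}\setminus X$ containing the singularities and exhausting $a_n\setminus X$ up to arbitrarily small area, so that $\sum_n\int_{a_n\setminus K_n}|f|^p\le 2\int_X|f|^p$; your cutoff must then be required to vanish on the $K_n$, not merely to have $\bar\partial\Phi$ supported in $\bigcup_n(a_n\setminus X)$. (b) The gluing step (controlling $\int g_n^q|\nabla\psi_n|^q$ via the Sobolev embedding, as in the paper) degenerates at the endpoint $q=2$, i.e.\ exactly at $p=2$; since the theorem is asserted for $p\ge 2$, the borderline case needs a separate argument --- this is precisely why Fernstr\"om--Polking is cited alongside Hedberg. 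Your heuristic that $p\ge2$ buys ``near-subadditivity of $\Gamma_q$'' is off the mark (capacity is subadditive for every $q$); what $q\le 2$ actually governs is the Sobolev embedding and the fact that for $q>2$ points in $\mathbb C$ have positive $q$-capacity.

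The genuine gap is the necessity direction. Your duality reduction is correct (a bounded point evaluation is represented by $g\in L^q(X)$ whose Cauchy transform equals $(x-w)^{-1}$ off $X$), but the entire content of that half of the theorem is the lower bound you assert in one sentence: that the prescribed behavior of $\hat g$ on $a_n(x)\setminus X$ forces the local $L^q$ mass of $g$ to dominate $2^{nq}\Gamma_q(a_n(x)\setminus X)$. That is not a routine ``Hedberg-type inequality'' one can invoke by name; it is the hard capacitary estimate that occupies most of Hedberg's paper for $p>2$ and required genuinely new arguments from Fernstr\"om and Polking at $p=2$. As written, your necessity argument is a citation dressed as a proof step: you would need to state the inequality precisely, prove it (or at least reduce it to a clearly identified result in the literature with matching hypotheses), and verify it survives the case $q=2$. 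Without that, the ``only if'' half of the theorem is unproved.
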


 The conditions in Theorem \ref{R^p} are purely geometric in that they only depend on the structure of the set $X$ and not on the properties of the functions in $R^p(X)$. If $1 \leq p < 2$ and $X$ has no interior, it is known that $R^p(X) = L^p(X)$ \cite{Havin, Sinanjan} and thus there are no bounded point evaluations on $R^p(X)$ \cite[Lemma 3.5]{Brennan1971}.

We now generalize these concepts to the case of several complex variables. Let $U$ be a bounded domain in $\mathbb C^d$ and let $O(\overline{U})$ denote the set of functions analytic on $\overline{U}$. Let $L^p_a(U)$ denote the closure of $O(\overline{U})$ in the $L^p$ norm. It follows from Runge's Theorem that if $d=1$, then $L^p_a(U) = R^p(\overline{U})$. 

\begin{definition}
A point $x \in \overline{U}$ is said to be a \textbf{bounded point evaluation on} $L^p_a(U)$ if the linear functional $f \to f(x)$ is bounded in $L^p_a(U)$.
\end{definition}

Bounded point evaluations have been studied in $\mathbb C^d$ due to their connections to both polynomial and rational approximations. \cite{ Lawrence, Range}. It follows directly from the definition that $x$ is a bounded point evaluation on $L^p_a(U)$ if and only if there exists a real constant $C>0$ such that 

\begin{align*}
    |f(x)| \leq C ||f||_{L^p(U)}
\end{align*}

\noindent for all $f \in L^p_a(U)$.

Our goal is to determine geometric conditions for the existence of bounded point evaluations on subsets of $\mathbb C^d$ when $d>1$. Similarly to the one variable case these conditions will be given in terms of an infinite sum of $q$-capacities; however, instead of annuli we will need to use a higher dimensional analogue of an annulus. Let $A_n(x) = \{z \in \mathbb C^d : 2^{-(n+1)} < |z-x| < 2^{-n} \}$. Our main result is the following.

\begin{theorem}
\label{main}
    Let $U$ be a bounded domain in $\mathbb C^d$ and let $p > \frac{2d}{2d-1}$. If

\begin{align}
\label{higher dimension series}
    \sum_{n=1}^\infty 2^{n(2d-1)q} \Gamma_q(A_n(x) \setminus U) < \infty
\end{align}

\noindent where $q = \frac{p}{p-1}$ and $\Gamma_q(E)$ denotes the Sobolev $q$-capacity of $E$, then $x$ is a bounded point evaluation for $L^p_a(U)$.
    
\end{theorem}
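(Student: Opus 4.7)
The plan is to generalize the sufficiency direction of Theorem~\ref{R^p} to several complex variables, replacing the one-variable Cauchy transform with the Bochner--Martinelli kernel
\[
    B(z, x) = \frac{(d-1)!}{(2\pi i)^d |z-x|^{2d}} \sum_{k=1}^d (-1)^{k-1} \overline{(z_k - x_k)}\, d\bar z_1 \wedge \cdots \wedge \widehat{d\bar z_k} \wedge \cdots \wedge d\bar z_d \wedge dz_1 \wedge \cdots \wedge dz_d,
\]
whose coefficients satisfy $|B(z, x)| \lesssim |z - x|^{1 - 2d}$. For any compactly supported Lipschitz $\psi$ with $\psi(x) = 1$ and any $f \in O(\overline U)$, the Bochner--Martinelli--Pompeiu formula yields $f(x) = -\int f\, \bar\partial \psi \wedge B(\cdot, x)$. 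I would combine this with H\"older's inequality to reduce the problem to constructing a single $\psi$, independent of $f$, with $\bar\partial \psi$ supported essentially inside $U$ and satisfying $\int |\bar\partial \psi|^q |z-x|^{(1-2d)q}\, dV < \infty$. Observe that the exponent $(2d-1)q$ appearing in~(\ref{higher dimension series}) precisely matches the order of the singularity of $B$, which is consistent with the one-variable case recovered when $d = 1$.

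To build $\psi$, I would, for each $n$, invoke Definition~\ref{q-capacity} to produce a Lipschitz function $u_n$ with compact support and values in $[0,1]$, satisfying $u_n \equiv 1$ on $A_n(x) \setminus U$ and $\int |\nabla u_n|^q\, dV \leq \Gamma_q(A_n(x)\setminus U) + 2^{-n}$. After localizing each $u_n$ to a slightly enlarged annulus $\tilde A_n(x)$, which costs only a harmless additive term given $q < 2d$ (equivalently $p > \frac{2d}{2d-1}$, the hypothesis of the theorem), I would set
\[
    \psi(z) = \eta(z) \prod_{n \geq N}\bigl(1 - u_n(z)\bigr),
\]
with $\eta$ a standard radial cutoff equal to $1$ near $x$ and supported in a small ball about $x$. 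Then $\psi(x) = 1$, $\psi$ vanishes on every $A_n(x) \setminus U$, and the product rule together with the bounded overlap of the $\tilde A_n$ gives $|\bar\partial \psi| \lesssim |\nabla \eta| + \sum_n |\nabla u_n|$ pointwise. Since $|z - x| \sim 2^{-n}$ on $\tilde A_n$, summation yields
\[
    \int |\bar\partial \psi|^q\, |z - x|^{(1-2d)q}\, dV \lesssim 1 + \sum_{n \geq N} 2^{n(2d-1)q}\, \Gamma_q\bigl(A_n(x) \setminus U\bigr) < \infty
\]
by hypothesis, so $|f(x)| \leq C \|f\|_{L^p(U)}$ with $C$ independent of $f$, and this bound extends by density to all of $L^p_a(U)$.

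The principal technical obstacle will be controlling the support of $\bar\partial \psi$ with respect to $U$. Because $x$ may lie on $\partial U$ with $U^c$ accumulating to $x$, continuity of $\psi$ prevents it from vanishing identically on $U^c$ in a punctured neighborhood of $x$, and a priori the gradients of the capacity functions $u_n$ may spill over into $U^c$, where no uniform control on $f$ by $\|f\|_{L^p(U)}$ is available. I would therefore refine the construction so that each transition region $\{0 < u_n < 1\}$ lies inside $U$, for instance by applying a Lipschitz truncation to near-optimal capacity minimizers that pushes their mass away from $A_n(x) \setminus U$ and into $U$ at only a constant multiplicative cost in the capacity. Establishing this support property, together with justifying the Bochner--Martinelli--Pompeiu identity for Lipschitz $\psi$ by a mollification argument, is expected to constitute the bulk of the technical work.
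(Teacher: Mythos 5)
Your proposal follows essentially the same route as the paper: both rest on the Bochner--Martinelli representation of $f(x)$, near-minimizers for $\Gamma_q(A_n(x)\setminus U)$ localized to the annuli (with the localization cost controlled via H\"older and the Sobolev inequality, which is exactly where $q<2d$, i.e. $p>\tfrac{2d}{2d-1}$, enters), and a final H\"older estimate against the $|z-x|^{1-2d}$ singularity of the kernel producing the weights $2^{n(2d-1)q}$. The only structural difference is that your cutoff $\psi=\eta\prod_n(1-u_n)$ equals $1$ at $x$ and vanishes on the bad sets, whereas the paper's $\phi=\sup_n\phi_n$ equals $1$ on the bad sets and the boundary integrals over $\partial K_n$ are converted by Stokes' theorem; these are equivalent formulations of the same estimate. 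One concrete slip needs fixing: the slack $2^{-n}$ in $\int|\nabla u_n|^q\,dV\le\Gamma_q(A_n(x)\setminus U)+2^{-n}$ is too generous, since $\sum_n 2^{n(2d-1)q}\cdot 2^{-n}$ diverges (because $(2d-1)q>1$), so these error terms cannot be absorbed into the constant "$1+$" in your final display; take the slack to be, say, $2^{-2nd(2d-1)}$ as the paper does, which is summable against the weights precisely because $q<2d$. Finally, the support difficulty you flag at the end is resolved in the paper not by pushing the transition regions of the $u_n$ into $U$, but by working with closed sets $K_n\subseteq\overline{A_n}\setminus\overline{U}$ containing the singularities and a continuous extension of $f$ whose $L^p$ norm off $\bigcup_n K_n$ is at most $2\|f\|_{L^p(U)}^p$; note also that your product structure already forces each term of $\bar\partial\psi$ to vanish on $\bigcup_m(A_m\setminus U)$, so most of the work you anticipate there is unnecessary.
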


Key to the proof of Theorem \ref{main} is the use of the Bochner-Martinelli integral formula to represent evaluation at $x$. We first review this important theorem and some related results in the next section and then prove Theorem \ref{main} in Section 3.

\section{The Bochner-Martinelli Integral Formula}

To prove Theorem \ref{main} we will make use of the Bochner Martinelli integral formula to represent point evaluation at $x$ as an integral. We can then demonstrate that $x$ is a bounded point evaluation by obtaining a bound on this integral. 

\begin{definition}
The \textbf{Bochner-Martinelli kernel} is defined as

\begin{align*}
    w(\zeta,z) = \frac{(d-1)!}{(2\pi  i)^d} \frac{1}{|\zeta-z|^{2d}} \sum_{j=1}^d (\overline{\zeta_j} - \overline{z_j}) d\overline{\zeta_1} \wedge d\zeta_1 \wedge \ldots \wedge d\zeta_j \wedge \ldots \wedge d\overline{\zeta_d} \wedge d\zeta_d
\end{align*}

\noindent where the term $d\overline{\zeta_j}$ has been omitted from the wedge product. 

\end{definition}

\begin{theorem}[\textbf{Bochner-Martinelli Integral Formula}]
    Suppose that $f$ is analytic on the closure of a domain $E$ in $\mathbb C^d$ with piecewise smooth boundary $\partial E$. Then if $z\in E$,

\begin{align*}
    f(z) = \int_{\partial E} f(\zeta) w(\zeta,z)
\end{align*}

\noindent where $w(\zeta, z)$ is the Bochner-Martinelli kernel.
    
\end{theorem}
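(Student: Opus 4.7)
The plan is to prove the formula by the standard Stokes-theorem argument: excise a small ball around the singularity of $w(\cdot,z)$ at $\zeta = z$ and pass to the limit. Choose $\epsilon > 0$ small enough that $\overline{B_\epsilon(z)} \subset E$, and set $E_\epsilon := E \setminus \overline{B_\epsilon(z)}$, whose oriented boundary is $\partial E - \partial B_\epsilon(z)$. I will apply Stokes' theorem to the $(2d-1)$-form $f(\zeta) w(\zeta,z)$ on $E_\epsilon$.

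The first technical step is to verify that $d_\zeta\bigl(f(\zeta) w(\zeta,z)\bigr) = 0$ on $E_\epsilon$. Since $w$ has bidegree $(d, d-1)$ in $\zeta$, the $(1,0)$-derivative $\partial_\zeta w$ vanishes for type reasons, so $d_\zeta w = \bar\partial_\zeta w$; likewise $df\wedge w = \partial f \wedge w$ (using $\bar\partial f = 0$) vanishes by bidegree. Thus it suffices to show $\bar\partial_\zeta w(\zeta,z) = 0$ for $\zeta \neq z$. Differentiating term-by-term, only the $k = j$ contribution survives in the wedge (otherwise $d\bar\zeta_k$ would be repeated), and the resulting scalar coefficient simplifies via
\begin{align*}
    \sum_{j=1}^d \frac{\partial}{\partial \bar\zeta_j}\left(\frac{\bar\zeta_j - \bar z_j}{|\zeta-z|^{2d}}\right) = \frac{d}{|\zeta-z|^{2d}} - \frac{d\,|\zeta-z|^2}{|\zeta-z|^{2d+2}} = 0,
\end{align*}
confirming that $w$ is $\bar\partial$-closed off the diagonal.

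With $d(fw) = 0$ on $E_\epsilon$, Stokes' theorem gives
\begin{align*}
    \int_{\partial E} f(\zeta)w(\zeta,z) = \int_{\partial B_\epsilon(z)} f(\zeta) w(\zeta,z).
\end{align*}
Since $f$ is continuous at $z$, splitting $f(\zeta) = f(z) + \bigl(f(\zeta) - f(z)\bigr)$ and using that the difference tends to $0$ uniformly on $\partial B_\epsilon(z)$ as $\epsilon \to 0$, the right-hand side converges to $f(z)\, \lim_{\epsilon \to 0} \int_{\partial B_\epsilon(z)} w(\zeta,z)$. The theorem therefore reduces to the normalization $\int_{\partial B_\epsilon(z)} w(\zeta,z) = 1$ for each sufficiently small $\epsilon$.

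This last identity is the main obstacle and the only substantive calculation. I would translate to $z = 0$ and parametrize $\partial B_\epsilon(0)$ by $\zeta = \epsilon\omega$ with $\omega \in S^{2d-1}$; because the denominator $|\zeta|^{2d} = \epsilon^{2d}$ is constant on the sphere and the differential form in the numerator scales homogeneously of degree $2d$ in $\epsilon$, the $\epsilon$ factors cancel and the integral is independent of $\epsilon$, reducing to an integral over the unit sphere. That sphere integral can be evaluated by applying Stokes once more on the unit ball: after noting that on $S^{2d-1}$ the kernel pulls back to the standard normalized volume form, the divergence identity from the $\bar\partial$-closedness computation produces precisely the volume of $B^{2d}$, and the prefactor $(d-1)!/(2\pi i)^d$ is chosen so that the product equals $1$. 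Combining these pieces yields the Bochner-Martinelli formula.
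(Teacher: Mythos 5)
The paper never proves this theorem: it quotes the Bochner--Martinelli formula as a classical fact and proves only the accompanying Lemma, i.e.\ the divergence identity $\sum_{j}\partial_{\overline{\zeta_j}}\big((\overline{\zeta_j}-\overline{z_j})/|\zeta-z|^{2d}\big)=0$, which is what Section 3 actually needs. Your proposal supplies the standard excision proof of the formula itself, and its backbone is sound: the bidegree argument reducing $d\big(f(\zeta)w(\zeta,z)\big)$ to $f\,\overline{\partial}_\zeta w$, the divergence computation (identical in content to the paper's Lemma), Stokes' theorem on $E\setminus\overline{B_\epsilon(z)}$, and the reduction to the normalization $\int_{\partial B_\epsilon(z)}w(\zeta,z)=1$. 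The one step you should tighten is that normalization: as written you say the ``divergence identity from the $\overline{\partial}$-closedness computation produces precisely the volume of $B^{2d}$,'' but that identity gives zero, not a volume, and the closed form $w$ itself cannot be fed into Stokes on the ball because of its singularity at $z$. The correct move is that on $\partial B_\epsilon(z)$ the denominator is the constant $\epsilon^{2d}$, so $w$ restricts there to the smooth form $\tfrac{(d-1)!}{(2\pi i)^d\epsilon^{2d}}\sum_j(\overline{\zeta_j}-\overline{z_j})\,d\overline{\zeta_1}\wedge d\zeta_1\wedge\ldots\wedge d\zeta_j\wedge\ldots\wedge d\overline{\zeta_d}\wedge d\zeta_d$, whose exterior derivative is $\tfrac{(d-1)!\,d\,(2i)^d}{(2\pi i)^d\epsilon^{2d}}\,dV$; Stokes over $B_\epsilon(z)$ then yields $\tfrac{d!\,(2i)^d}{(2\pi i)^d\epsilon^{2d}}\cdot\tfrac{\pi^d\epsilon^{2d}}{d!}=1$, which is exactly how the prefactor $(d-1)!/(2\pi i)^d$ is accounted for. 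The same scaling observation also gives the uniform bound on $\int_{\partial B_\epsilon(z)}|w|$ that you implicitly use to dispose of the $f(\zeta)-f(z)$ term. With that step made explicit your argument is complete; it is the standard textbook route, and relative to the paper it proves strictly more, since the paper only establishes the closedness lemma and takes the integral formula as given.
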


We conclude with the following lemma on the derivatives related to the Bochner-Martinelli kernel.

\begin{lemma}
    Suppose $\zeta \neq z$. Then

\begin{align*}
    \sum_{j=1}^d \frac{\partial}{\partial\overline{\zeta_j}} \left( \frac{\overline{\zeta_j}-\overline{z_j}}{|\zeta-z|^{2d}} \right) = 0
\end{align*}
    
\end{lemma}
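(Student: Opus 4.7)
The plan is a direct computation using Wirtinger derivatives, with no real obstacle beyond bookkeeping. The essential ingredient is the identity
\begin{align*}
\frac{\partial}{\partial \overline{\zeta_j}} |\zeta-z|^2 = \frac{\partial}{\partial \overline{\zeta_j}} \sum_{k=1}^d (\zeta_k - z_k)(\overline{\zeta_k} - \overline{z_k}) = \zeta_j - z_j,
\end{align*}
which, together with the chain rule, gives $\frac{\partial}{\partial \overline{\zeta_j}} |\zeta-z|^{2d} = d\,|\zeta-z|^{2(d-1)}(\zeta_j-z_j)$ on the set where $\zeta \neq z$.

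Next I would apply the quotient rule to each summand. The numerator's $\overline{\zeta_j}$-derivative is $1$, so
\begin{align*}
\frac{\partial}{\partial\overline{\zeta_j}} \left( \frac{\overline{\zeta_j}-\overline{z_j}}{|\zeta-z|^{2d}} \right) = \frac{1}{|\zeta-z|^{2d}} - \frac{d(\overline{\zeta_j}-\overline{z_j})(\zeta_j-z_j)}{|\zeta-z|^{2d+2}}.
\end{align*}

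Finally I would sum over $j=1,\ldots,d$ and use $\sum_{j=1}^d (\overline{\zeta_j}-\overline{z_j})(\zeta_j-z_j) = |\zeta-z|^2$ to obtain
\begin{align*}
\sum_{j=1}^d \frac{\partial}{\partial\overline{\zeta_j}} \left( \frac{\overline{\zeta_j}-\overline{z_j}}{|\zeta-z|^{2d}} \right) = \frac{d}{|\zeta-z|^{2d}} - \frac{d\,|\zeta-z|^2}{|\zeta-z|^{2d+2}} = 0,
\end{align*}
which is valid precisely because $\zeta \neq z$ ensures we never divide by zero. There is no real difficulty here; the only point worth flagging is making sure the convention $\frac{\partial}{\partial \overline{\zeta_j}}\overline{\zeta_j} = 1$ and $\frac{\partial}{\partial \overline{\zeta_j}}\zeta_k = 0$ is used consistently, so that no cross terms from $k \neq j$ appear in the differentiation of $|\zeta - z|^2$.
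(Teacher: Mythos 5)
Your computation is correct and follows essentially the same route as the paper: a direct quotient-rule calculation whose two terms cancel after summing over $j$ via $\sum_j |\zeta_j - z_j|^2 = |\zeta-z|^2$. The only cosmetic differences are that the paper first normalizes $z=0$ and differentiates the expanded polynomial $(\sum_k \overline{\zeta_k}\zeta_k)^d$ directly rather than using the chain rule on $(|\zeta-z|^2)^d$.
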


\begin{proof}
To simplify calculations, we suppose that $z=0$. Since

\begin{align*}
    \frac{\partial}{\partial\overline{\zeta_j}} \left( \frac{\overline{\zeta_j}}{|\zeta|^{2d}} \right) = \frac{(\overline{\zeta_1}\zeta_1 + \ldots + \overline{\zeta_d} \zeta_d)^d-d(\overline{\zeta_1}\zeta_1 + \ldots + \overline{\zeta_d}\zeta_d)^{d-1} \overline{\zeta_j} \zeta_j}{|\zeta|^{4d}},
\end{align*}

\noindent it follows that

\begin{align*}
    \sum_{j=1}^d \frac{\partial}{\partial\overline{\zeta_j}} \left( \frac{\overline{\zeta_j}}{|\zeta|^{2d}} \right) = \frac{d(\overline{\zeta_1}\zeta_1 + \ldots + \overline{\zeta_d} \zeta_d)^d - d \sum_{j=1}^d (\overline{\zeta_1}\zeta_1 + \ldots + \overline{\zeta_d}\zeta_d)^{d-1} \overline{\zeta_j} \zeta_j}{|\zeta|^{4d}}.
\end{align*}

\noindent Using the multinomial theorem \cite[pg. 28]{Stanley} it can be shown that 

\begin{align*}
    \sum_{j=1}^d (\overline{\zeta_1}\zeta_1 + \ldots + \overline{\zeta_d}\zeta_d)^{d-1} \overline{\zeta_j} \zeta_j = (\overline{\zeta_1}\zeta_1 + \ldots + \overline{\zeta_d}\zeta_d)^d.
\end{align*}

\noindent Hence for $\zeta \neq 0$

\begin{align}
\label{vanishing derivative}
    \sum_{j=1}^d \frac{\partial}{\partial\overline{\zeta_j}} \left( \frac{\overline{\zeta_j}}{|\zeta|^{2d}} \right)=0
\end{align}

\noindent and the lemma is proved.  

\end{proof}

\section{Point Evaluations and Capacity}

The proof of Theorem \ref{main} proceeds in the following manner. The Bochner-Martinelli formula is used to obtain an integral representation for the point evaluation at $x$ and this integral is split up into subintegrals taken over nice sets containing the singularities of $f$ in $A_n$. Each subintegral is modified by multiplying the integrand by a Lipschitz function with nice properties that is identically $1$ on its domain of integration. The generalized Stokes' theorem and standard inequalities are then used to provide an upper bound for the integral in terms of the series of capacities \eqref{higher dimension series}, which shows that the point evaluation is bounded.

\begin{proof}
    Without loss of generality, we may suppose that $x=0$ and $U$ is contained in $B_1$, the unit ball in $\mathbb C^d$ centered at the origin. Let $f$ be analytic in a neighborhood of $\overline{U}$. We may also suppose that $f$ is modified so that it is continuous on $\mathbb C^d$ but still analytic on a neighborhood of $\overline{U}$. Let $A_n = \{ z\in \mathbb C^d : 2^{-(n+1)} < |z| < 2^{-n}\}$. We can find closed sets $K_n \subseteq \overline{A_n} \setminus \overline{U}$ with nice boundaries so that $f$ is analytic in $B_1 \setminus \bigcup K_n$ and $\int_{B_1 \setminus \cup K_n} |f|^p dV \leq 2 \int_U |f|^p dV$. It follows from the Bochner-Martinelli formula that

\begin{align*}
    f(0) &= \int_{\partial U} f(\zeta)w(\zeta,0)\\
    &= \sum_{n=1}^{\infty} \int_{\partial K_n} f(\zeta)w(\zeta,0)
\end{align*}

\noindent where $w(\zeta,z)$ is the Bochner-Martinelli kernel. Since $K_n \subseteq \overline{A_n} \setminus \overline{U}$, we can find Lipschitz functions $g_n$ with compact support so that $g_n \equiv 1$ on $K_n$ and $\int |\nabla g_n|^q dV \leq \Gamma_q(A_n\setminus U) + 2^{-2nd(2d-1)}$. We will show that such functions can be chosen to have support on $A_{n-1} \cup A_n \cup A_{n+1}$.

Let $\psi_n(z)$ be a piecewise linear continuous function of $|z|$ such that $0 \leq |\psi_n(z)| \leq 1$, $\psi_n(z) = 1$ on $A_n$, $\psi_n(0) = 0$ outside $A_{n-1} \cup A_n \cup A_{n+1}$ and $|\nabla \psi_n(z)| \leq  2^{n+2}$. Now let $\phi_n(z) = g_n(z)\psi_n(z) $. Then $\phi_n(z)$ has support on $A_{n-1} \cup A_n \cup A_{n+1}$ and

\begin{align*}
    \int |\nabla\phi_n(z)|^q dV \leq \int g_n(z)^q |\nabla \psi_n(z)|^q dV + \int \psi_n(z)^q |\nabla g_n(z)|^q dV. 
\end{align*}

\noindent Immediately we have that

\begin{align*}
    \int \psi_n(z)^q |\nabla g_n(z)|^q dV \leq \int |\nabla g_n(z)|^q dV \leq \Gamma_q(A_n \setminus U) + 2^{-2nd(2d-1)}.
\end{align*}

\noindent Let $q = \frac{p}{p-1}$. Since $p > \frac{2d}{2d-1}$ it follows that $q < 2d$ and hence H\"older's inequality with $p' = \frac{2d}{2d-q}$ and $q' = \frac{2d}{q}$ yields

\begin{align*}
    \int g_n(z)^q |\nabla \psi_n(z)|^q dV \leq \left(\int g_n(z)^{\frac{2dq}{2d-q}} dV\right)^{\frac{2d-q}{2d}} \left( \int |\nabla \psi_n(z)|^{2d} dV \right)^{\frac{q}{2d}}.
\end{align*}

\noindent By the General Sobolev inequality \cite[pg. 284]{Evans}

\begin{align*}
    \left(\int g_n(z)^{\frac{2dq}{2d-q}} dV\right)^{\frac{2d-q}{2d}} \leq C \int |\nabla g_n(z)|^q dV
\end{align*}

\noindent and since 

\begin{align*}
    \int |\nabla \psi_n(z)|^{2d} dV \leq C
\end{align*}

\noindent where $C$ is a constant only depending on $d$, it follows that

\begin{align*}
    \int |\nabla \phi_n(z)|^q dV \leq C \left( \Gamma_q(A_n\setminus U) +  2^{-2nd(2d-1)}\right)
\end{align*}

\noindent  as desired. Furthermore, $\phi_n(z) \equiv 1$ on $K_n$. Now let $\phi(z) = \sup_n \phi_n(z)$. Then $\phi(z) \equiv 1$ on $\bigcup_n K_n$ and 

\begin{align*}
    |\nabla \phi(z)| \leq \sum_{n=1}^\infty |\nabla \phi_n(z)|.
\end{align*}

\noindent Hence \begin{align*}
    f(0) = \sum_{n=1}^{\infty} \int_{\partial K_n} f(\zeta) \phi(\zeta)w(\zeta,0).
\end{align*}

\noindent From the generalized Stokes' theorem,

\begin{align*}
   \sum_{n=1}^{\infty} \int_{\partial K_n} f(\zeta)\phi(\zeta) w(\zeta,0) = \sum_{n=1}^\infty \int_{A_n \setminus K_n} \mathbf{d}[f(\zeta)\phi(\zeta)w(\zeta,0)]
\end{align*}

\noindent where $\mathbf{d}$ denotes the exterior derivative. Recall that $d\overline{\zeta_1}\wedge d\zeta_1 \wedge \ldots \wedge d\overline{\zeta_d} \wedge d\zeta_d = (2i)^d dV$. After computing the exterior derivative of the Bochner-Martinelli kernel and recalling that $d\zeta_j \wedge d\zeta_k = 0$ when $j=k$, we obtain

\begin{align*}    
   f(0) =\sum_{n=1}^\infty \int_{A_n \setminus K_n} \sum_{j=1}^d \frac{\partial}{\partial \overline{\zeta_j}} \left( f(\zeta) \phi(\zeta) \frac{(d-1)!}{\pi^d|\zeta|^{2d}} \overline{\zeta_j} \right) dV.
\end{align*}

\noindent Since $f$ is analytic off $K_n$ it follows that 

\begin{align*}
    \sum_{j=1}^d \frac{\partial}{\partial \overline{\zeta_j}} \left( f(\zeta) \phi(\zeta) \frac{(d-1)!}{\pi^d|\zeta|^{2d}} \overline{\zeta_j} \right) = \frac{f(\zeta) (d-1)!}{\pi^d} \sum_{j=1}^d \frac{\partial}{\partial \overline{\zeta_j}} \left( \phi(\zeta)\frac{\overline{\zeta_j}}{|\zeta|^{2d}} \right).
\end{align*}

\noindent Next, it follows from \eqref{vanishing derivative} that 

\begin{align*}
    \sum_{j=1}^d \frac{\partial}{\partial \overline{\zeta_j}} \left( \phi(\zeta)\frac{\overline{\zeta_j}}{|\zeta|^{2d}} \right) = \sum_{j=1}^d \frac{\partial \phi}{\partial \overline{\zeta_j}} \left(\frac{\overline{\zeta_j}}{|\zeta|^{2d}} \right)
\end{align*}

\noindent and thus 

\begin{align*}
    |f(0)| = \frac{(d-1)!}{\pi^d}\left| \sum_{n=1}^\infty \int_{A_n \setminus K_n} f(\zeta) \sum_{j=1}^d \frac{\partial \phi}{\partial \overline{\zeta_j}} \left(\frac{\overline{\zeta_j}}{|\zeta|^{2d}}\right) dV\right|.
\end{align*}

\noindent Applying the Cauchy-Schwarz inequality yields  

\begin{align*}
    |f(0)| &\leq C \sum_{n=1}^\infty \int_{A_n \setminus K_n} \frac{|f(\zeta)|}{|\zeta|^{2d-1}}|\nabla \phi(\zeta)| dV\\
    &\leq C \sum_{n=1}^\infty 2^{n(2d-1)} \int_{A_n \setminus K_n} |f(\zeta)| \left(|\nabla \phi_{n-1}(\zeta)| + |\nabla \phi_{n}(\zeta)| + |\nabla \phi_{n+1}(\zeta)| \right) dV.
\end{align*}

\noindent This sum can be bounded using H\"older's inequality and \eqref{higher dimension series} as follows.

\begin{align*}
    |f(0)| &\leq C \sum_{n=1}^\infty 2^{n(2d-1)} \left( \int_{A_n \setminus K_n} |f(\zeta)|^p dV \right)^{\frac{1}{p}} \left( \int_{A_n \setminus K_n} \left(|\nabla \phi_{n-1}(\zeta)| + |\nabla \phi_{n}(\zeta)| + |\nabla \phi_{n+1}(\zeta)| \right)^q dV \right)^{\frac{1}{q}}\\
    &\leq C \sum_{n=1}^\infty 2^{n(2d-1)} \left( \int_{A_n \setminus K_n} |f(\zeta)|^p dV \right)^{\frac{1}{p}} \left( \Gamma_q(A_{n-1} \setminus U) + \Gamma_q(A_n \setminus U) + \Gamma_q(A_{n+1} \setminus U) + 2^{-2nd(2d-1)} \right)^{\frac{1}{q}}\\
    &\leq C \left( \sum_{n=1}^{\infty} \int_{A_n \setminus K_n} |f(\zeta)|^p dV \right)^{\frac{1}{p}} \left( \sum_{n=1}^{\infty} 2^{n(2d-1)q} \Gamma_q(A_n \setminus U) + C \right)^{\frac{1}{q}}\\
    &\leq C ||f||_p
\end{align*}

\noindent Thus $0$ is a bounded point evaluation for $L^p_a(U)$.
    
\end{proof}

\bibliographystyle{abbrv}
\bibliography{biblio}

\end{document}